\title{Metric Diophantine Approximation on Fractals}
\author{James Wyatt}
\theoremstyle{plain} 
\newtheorem{thm}{Theorem}[section] 
\newtheorem{lem}[thm]{Lemma}
\newtheorem*{Khintchine}{Khintchine's Theorem} 
\newtheorem*{Main}{Main result}
\theoremstyle{definition} 
\newtheorem{defn}{Definition}[section]
\theoremstyle{remark} 
\begin{document} 
\maketitle
\begin{abstract}
Inspired by a problem proposed by Mahler \citep{KUR}, we will address the following related question, `How well can irrationals in a missing digit set be approximated by rationals with polynomial denominators?' and prove some related results. To achieve this, we will be closely looking at Khintchine's theorem, particularly the convergence case and aim to prove a Khintchine-like convergence theorem for missing digit sets with large bases and rationals with polynomial denominators.
\end{abstract} 
\section{Introduction} 
The result we will prove in this paper is stated as follows.

We define $||x||$ as the distance between $x$ and the nearest integer.

For a polynomial $P$ and missing digit fractal $K$ on $[0,1]$ let,
\[
W_{P(q)}(\psi)=\{x\in K^{d} : \text{max}\{||P(q)x_{i}||\}<\psi(q)~\text{for infinitely many }q\in\mathbb{N}\}.
\]
\begin{Main}
Let $\psi$ be a monotonically decreasing function. For a missing digit measure $m$ on $K^{d} \subset [0,1]^d$, $d \in \mathbb{N}$, with $K$ having a large base and only one missing digit,
\[
m(W_{P(q)}(\psi))=0,~\text{if}~\sum^{\infty}_{q=1}\psi(q)<\infty.
\]

\end{Main}

In 1984, Mahler presented eight problems in transcendental number theory and Diophantine approximation \citep{KUR}. One of these problems posed the questions, 
\begin{itemize}
\item How well can irrationals in the middle third Cantor set be approximated by rationals in the middle third Cantor set?
\item How well can irrationals in the middle third Cantor set be approximated by rationals not in the middle third Cantor set?
\end{itemize}
The second part of this problem serves as the inspiration for this paper. 
In 2005, Kleinbock-Lindenstrauss-Weiss \citep{DMI} framed the problem to ask whether there exists an analogue to Khintchine's theorem \citep{ALE}, an important theorem in metric Diophantine approximation, for the middle third Cantor set. 

To state Khintchine's theorem, we need a notion of what is means for a point to be approximated. We call a point $x \in \mathbb{R}$ \emph{$\psi$-approximable}, if for infinitely many $q\in \mathbb{N}$,
\[
||qx||<\psi(q).
\]
We let $W(\psi)$ be the set of points that are $\psi$-approximable and are able to state Khintchine's theorem.
\begin{Khintchine}[Khintchine, 1924]
\citep{ALE} Let $\psi:\mathbb{N}\to[0,+\infty)$ be a monotonically decreasing function and let $\lambda$ denote the Lebesgue measure. Then,
\[
\lambda(W(\psi))=
\begin{cases}
0 & \text{if } \sum^{\infty}_{q=1}\psi(q)<\infty,\\
1 & \text{if } \sum^{\infty}_{q=1}\psi(q)=\infty.
\end{cases}
\]
\end{Khintchine}
 
In 2024, B\'enard-He-Zhang established the analogue for Khintchine's theorem on self-similar measures on $\mathbb{R}$ \citep{TIM} and then extended this result to $\mathbb{R}^{d}$ \citep{BEN}. Other recent research surrounding this problem includes \citep{MAN, DAV, HAN, SHR, SAM, CHO}.

All of this research is interested with approximating irrationals by rationals with linear denominators and so we will consider approximating irrationals by rationals with polynomial denominators. The middle third Cantor set can be described as a missing digit set of base $3$, motivating our study of missing digit sets. In this paper, we will show an analogue to the convergence case of Khintchine's theorem for approximating irrationals in a missing digit set with a sufficiently large base and one missing digit on $[0,1]^{d}$ by rationals with polynomial denominators.
\section{Preliminaries}
Firstly, we state the convergence case for an analogue of Khnitchine's theorem for approximating by rationals with polynomial denominators. 
For a polynomial $P$ let
\[
W_{P(q)}(\psi)=\{x\in [0,1] : ||P(q)x||<\psi(q)~\text{for infinitely many }q\in\mathbb{N}\}.
\]
\begin{thm}
Let $\psi:\mathbb{N}\to[0,+\infty)$ be a monotonically decreasing function and let $\lambda$ denote the Lebesgue measure. Then,
\[
\lambda(W_{P(q)}(\psi))=
0 \text{ if } \sum^{\infty}_{q=1}\psi(q)<\infty.
\]
\end{thm}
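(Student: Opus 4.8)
The plan is to recognise $W_{P(q)}(\psi)$ as a $\limsup$ set and then apply the convergence (first) Borel--Cantelli lemma, so that the whole problem reduces to estimating the Lebesgue measure of a single level set. Concretely, for each $q \in \mathbb{N}$ I would set
\[
A_q = \{\, x \in [0,1] : ||P(q)x|| < \psi(q) \,\},
\]
so that by definition $W_{P(q)}(\psi) = \bigcap_{N=1}^{\infty} \bigcup_{q \ge N} A_q = \limsup_{q\to\infty} A_q$. Borel--Cantelli then guarantees $\lambda(\limsup_q A_q) = 0$ as soon as $\sum_q \lambda(A_q) < \infty$, so it suffices to bound $\lambda(A_q)$ by a constant multiple of $\psi(q)$.

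The crux is this measure estimate, and the key observation is that it is insensitive to the degree of $P$. Writing $m = |P(q)|$ and assuming (as we may, since $\sum_q\psi(q)<\infty$ forces $\psi(q)\to 0$) that $\psi(q) < 1/2$, the set $A_q$ is exactly the preimage under $x \mapsto mx$ of a $\psi(q)$-neighbourhood of the integers. This preimage is a disjoint union of intervals centred at the rationals $p/m$, $p = 0,1,\dots,m$, of length $2\psi(q)/m$ in the interior and $\psi(q)/m$ at the two endpoints. Summing,
\[
\lambda(A_q) = (m-1)\cdot \frac{2\psi(q)}{m} + 2\cdot\frac{\psi(q)}{m} = 2\psi(q),
\]
independent of $m$ and hence of $\deg P$. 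Equivalently, the $1/m$-periodicity of $x \mapsto ||mx||$ shows at once that $\lambda(A_q) = 2\psi(q)$.

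With this in hand the argument closes immediately: since $\psi$ is summable, so is $\sum_q \lambda(A_q) = 2\sum_q \psi(q) < \infty$ (after discarding the finitely many $q$ with $\psi(q) \ge 1/2$, which do not affect the $\limsup$), and Borel--Cantelli delivers $\lambda(W_{P(q)}(\psi)) = 0$. I do not expect a genuine obstacle here; the only points needing care are the degenerate values of $q$. If $P$ has integer coefficients then $P(q)\in\mathbb{Z}$, and the finitely many $q$ with $P(q) = 0$ give $A_q = [0,1]$, while negative values are handled by the symmetry $||P(q)x|| = ||\,|P(q)|\,x||$; in every case only finitely many exceptional terms arise, so they are harmless for a $\limsup$. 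The conceptual content is therefore entirely in the degree-independence of the level-set measure, which is precisely what makes polynomial denominators behave, in the convergence case, exactly like linear ones.
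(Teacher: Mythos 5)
Your proposal is correct and follows exactly the route the paper intends: the paper's entire proof of this theorem is the remark that it ``can be proved simply using the Borel--Cantelli lemma,'' and your argument is precisely that lemma applied to the level sets $A_q$, with the measure computation $\lambda(A_q)=2\psi(q)$ supplying the summability hypothesis. Your write-up is in fact more complete than the paper's, since it records the degree-independence of the level-set measure and handles the degenerate values of $P(q)$ explicitly.
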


This can be proved simply using the Borel-Cantelli lemma. 

The fractals we will be looking at in this paper are missing digit sets. 
\begin{defn}
Let $b \in \mathbb{N}$, $b\geq2$ and $D\subset\{0,1,...b-1\}$.
We define a missing digit set $K_{b,D}$ to be the set of real numbers in the interval $[0,1]$ with at least one base expansion with digits only from the set $D$. 
\end{defn}
This means that we are able to write the middle third Cantor set as $K_{3,\{0,2\}}$. 

\begin{defn} For a missing digit set $K_{b,D}$, let $D_{i}$, $i\geq1$, be a collection of independently and identically distributed random variables with elements in $D$. We define the \emph{missing digit measure} $\mu_{b,D}$ supported on $K_{b,D}$ to be the Borel probability measure of the distribution of the random number, 
\[
\sum_{i=1}^{\infty}D_{i}b^{-i}.
\]
\end{defn}
Continuing forward in this paper we will use Vinogradov notation, primarily $\ll$. $f \ll g$ if there exists some constant $C>0$ such that $|f|\leq C|g|$ pointwise.

Introduced by Yu in \citep{HAN}, the Fourier $l^{t}$-dimension will be an essential part of the main proofs of this paper.
Let $\nu$ be a Borel probability measure on $[0,1]$. For $\xi \in \mathbb{Z}$, we define the $\xi^{\text{th}}$ Fourier coefficient of $\nu$ as,
\[
\hat{\nu}(\xi)=\int_{0}^{1}e^{-2\pi i \xi x}d\nu(x)
\]
\begin{defn}
We define the \emph{Fourier $l^t$-dimension} as 
\[
\hat{\kappa}_{t}(\nu)=\text{sup}\left\{s\geq0:\sum_{\xi=0}^{Q}|\hat{\nu}(\xi)|^{t}\ll Q^{1-s}\right\}
\]
\end{defn}
\section{Results}

Let $n$ be the order of polynomial $P$ and $K$ a missing digit set.

Define $A(q,\delta)=\{x\in K:||P(q)x||<\delta\}$, inspired by \citep[p.~8]{SAM}.

By choosing the base of $K$ sufficiently large we can ensure that $1-\frac{1}{n}<\hat{\kappa}_{1}(m)$, which we can deduce from \citep[Theorem 2.6]{SAM}.
\begin{lem}
Let $v$ be a number in the range $1-\frac{1}{n}<v<\hat{\kappa}_{1}(m)$. Then
\[
\sum^{2Q}_{q=Q}m(A(q,\delta)) \ll \delta Q
\]
for $Q \in \mathbb{N}$ and $\delta\geq Q^{-(2v-1)/(1-v)}$.
\end{lem}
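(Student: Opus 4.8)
The plan is to estimate each $m(A(q,\delta))$ by Fourier analysis on the circle and then sum over the dyadic block $Q \le q \le 2Q$. Writing $\mathbf{1}_\delta$ for the indicator of the $\delta$-neighbourhood of $0$ in $\mathbb{R}/\mathbb{Z}$, we have $m(A(q,\delta)) = \int_K \mathbf{1}_\delta(P(q)x)\,dm(x)$. First I would replace $\mathbf{1}_\delta$ by a trigonometric majorant $\Phi \ge \mathbf{1}_\delta$ (a Selberg--Beurling / Vaaler type majorant) whose Fourier coefficients $c_\xi$ vanish for $|\xi| > H$, satisfy $c_0 = 2\delta + O(1/H)$, and obey $|c_\xi| \ll \min(\delta, |\xi|^{-1}) + H^{-1}$. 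Expanding $\Phi$ in its Fourier series and integrating term by term gives
\[
m(A(q,\delta)) \le c_0 + \sum_{1 \le |\xi| \le H} c_\xi\, \overline{\hat{m}(\xi P(q))},
\]
so the estimate splits into a main term from $\xi = 0$ and an error term from the nonzero frequencies.

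Summing over $Q \le q \le 2Q$, the main term contributes $(Q+1)c_0 = (Q+1)(2\delta + O(1/H))$, which is $\ll \delta Q$ as soon as $H \gg \delta^{-1}$; I would therefore fix $H \asymp \delta^{-1}$, which also forces $|c_\xi| \ll \delta$ throughout $1 \le |\xi| \le H$. Using $|\hat{m}(-\eta)| = |\hat{m}(\eta)|$, the whole lemma then reduces to showing that the error double sum
\[
S = \sum_{q=Q}^{2Q}\ \sum_{\xi=1}^{\lceil 1/\delta\rceil} \bigl|\hat{m}(\xi P(q))\bigr|
\]
satisfies $S \ll Q$, since then $\sum_q m(A(q,\delta)) \ll \delta Q + \delta S \ll \delta Q$.

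To control $S$ I would bring in the hypothesis $v < \hat{\kappa}_{1}(m)$, which yields $\sum_{\xi=0}^{N}|\hat{m}(\xi)| \ll N^{1-v}$ for every $N$. The subtlety is that the frequencies in $S$ are the integers $\xi P(q)$, which are not an initial segment of $\mathbb{N}$ but a polynomially spaced family living in $[1, \delta^{-1}P(2Q)]$ with $P(2Q) \asymp Q^{n}$. I would decompose the range of $k = \xi P(q)$ into dyadic scales $[2^j, 2^{j+1})$, apply the $l^1$-dimension bound $\sum_{k \in [2^j,2^{j+1})}|\hat{m}(k)| \ll 2^{j(1-v)}$ on each scale, and estimate how many pairs $(\xi,q)$ have product in each scale, using that $P$ is increasing so the values $P(q)$ are $\asymp Q^{n-1}$-separated. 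Summing the dyadic contributions and inserting $\delta^{-1} \le Q^{(2v-1)/(1-v)}$ is what I expect to deliver $S \ll Q$.

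The main obstacle is precisely this last conversion: the $l^1$-dimension controls only averages of $|\hat{m}|$ over initial segments of the integers, whereas $S$ samples $\hat{m}$ along the sparse, multiplicatively structured set $\{\xi P(q)\}$. Turning the average control into a bound on this structured sum — keeping track of the multiplicities $\#\{(\xi,q): \xi P(q) = k\}$ and of how many such points fall in each dyadic window — is the heart of the argument, and the book-keeping here is exactly what fixes the admissible range $\delta \ge Q^{-(2v-1)/(1-v)}$. The hypothesis $1 - \tfrac1n < v$ (equivalently $n(1-v) < 1$) should enter at this point to guarantee that the dyadic sum, taken over scales up to $\asymp \delta^{-1}Q^{n}$, is dominated by its top scale and does not overwhelm the target bound $Q$.
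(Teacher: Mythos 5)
Your opening reduction is sound and is in fact the same as the paper's: the paper quotes a Parseval-type inequality from Yu (Theorem 4.1 of the cited work) which gives precisely your bound
\[
m(A(q,\delta)) \;\ll\; \delta\Bigl(1+\sum_{0<|\ell|\le 2/\delta}\bigl|\hat{m}(\ell P(q))\bigr|\Bigr),
\]
so the lemma does reduce to showing that your double sum $S\ll Q$. The genuine gap is that you stop exactly at this point: you name the bound on $S$ as ``the heart of the argument'' but do not supply the idea that proves it, and the mechanism you sketch would not work. Exploiting the sparseness of the family $\{\xi P(q)\}$ --- the $\asymp Q^{n-1}$ separation of the values $P(q)$, or a count of how many products $\xi P(q)$ fall in each dyadic window --- is a dead end, because the Fourier $l^{1}$-dimension is purely average information over complete initial segments (equivalently complete dyadic blocks): it says nothing about \emph{where} inside a block the large coefficients sit. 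For a missing digit measure they genuinely can sit on your structured set: by self-similarity $\hat{m}(b\xi)=\hat{m}(\xi)$ for $\xi\in\mathbb{Z}$, so $|\hat{m}(b^{j})|=|\hat{m}(1)|\neq 0$ for every $j$, and there is no pointwise decay whatsoever. Consequently the number of structured points per window buys you nothing; the only quantity that can enter is the multiplicity $\mu(k)=\#\{(\xi,q):\xi P(q)=k\}$, and you never bound it.

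The missing idea, and the one the paper uses, is that $\xi P(q)=k$ forces $P(q)\mid k$, so $\mu(k)$ is at most (a constant times) the divisor function $\tau(k)$, and $\tau(k)\ll k^{\varepsilon/2}$ for any $\varepsilon>0$. One then abandons sparseness entirely and majorizes $S$ by the sum over \emph{all} integers up to $2P(2Q)/\delta$: with $\varepsilon:=\hat{\kappa}_{1}(m)-v$,
\[
S\;\le\;\sum_{0<k\le 2P(2Q)/\delta}\tau(k)\,|\hat{m}(k)|\;\ll\;\Bigl(\tfrac{Q^{n}}{\delta}\Bigr)^{\varepsilon/2}\,\Bigl(\tfrac{Q^{n}}{\delta}\Bigr)^{1-\hat{\kappa}_{1}(m)+\varepsilon/2}\;=\;\Bigl(\tfrac{Q^{n}}{\delta}\Bigr)^{1-v},
\]
where the strict inequality $v<\hat{\kappa}_{1}(m)$ is exactly what pays for the divisor factor. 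This looks wasteful but is sufficient: $\delta S\ll\delta Q$ holds precisely when $(Q^{n}/\delta)^{1-v}\ll Q$, i.e.\ when $\delta\ge Q^{-u}$ with $u=\frac{1}{1-v}-n=\frac{1-n(1-v)}{1-v}$, which is the exponent the paper's proof actually derives (it agrees with the $(2v-1)/(1-v)$ in the statement only when $n=2$). Note also that your placement of the hypothesis $v>1-\frac{1}{n}$ is off: top-scale domination of the dyadic sum holds for every $v<1$; what $v>1-\frac{1}{n}$ really does is make the exponent $u$ positive, so that the admissible range of $\delta$ is nontrivial.
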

\begin{proof}
Assume $Q$ is large and let $\varepsilon=\hat{\kappa}(m)-v$.
By applying Parseval's formula as in \citep[Theorem 4.1]{HAN} we have that
\[
m(A(q,\delta)) \ll \delta \sum_{\substack{|\xi|\leq \frac{2P(q)}{\delta}\\ |\xi|=0~mod~P(q)}}|\hat{m}(\xi)|
\]
Let $\tau(\xi)$ denote the number of divisors of $\xi$, we have
\begin{align*}
\sum^{2Q}_{q=Q}m(A(q,\delta)) &\ll \delta \left(Q|\hat{m}(0)|+\sum_{0<|\xi|\leq \frac{2P(2Q)}{\delta}}\tau(\xi)|\hat{m}(\xi)|\right) \\
&\ll \delta \left(Q+\left(\frac{P(Q)}{\delta}\right)^{\frac{\varepsilon}{2}}\sum_{|\xi|\leq \frac{2P(Q)}{\delta}}|\hat{m}(\xi)|\right) \\
&\ll \delta \left(Q+\left(\frac{P(Q)}{\delta}\right)^{\varepsilon+1-\hat{\kappa}(m)}\right) \\
&\ll \delta \left(Q+\left(\frac{P(Q)}{\delta}\right)^{1-v}\right) \\
&\ll \delta \left(Q+\left(\frac{Q^{n}}{\delta}\right)^{1-v}\right)
\end{align*}
Therefore, we have that 
\[
\sum^{2Q}_{q=Q}m(A(q,\delta)) \ll \delta Q
\]
as long as $\delta\geq Q^{-u}$, where
\[
u=\dfrac{1}{1-v}-n=\dfrac{1-n(1-v)}{1-v}
\]
\end{proof}
Now we want to generalise this result to higher dimensions.
For a missing digit set $K$, let us denote its probability measure as $m_{1}$. For $K^{2}=K \times K$ we denote its probability measure as $m_{2}$ and so $K^{d}$ has measure $m_{d}$.
Let $B(q,\delta)= \{ x=(x_{1}, x_{2}, \dotsc , x_{d}) \in K^{d} : \text{max}\{|| P(q)x_{1}||, \dotsc , || P(q)x_{d}|| \} < \delta \}$. 

\begin{lem}
Let $v$ be a number in the range $1-\frac{1}{n}<v<\hat{\kappa}_{1}(m_{1})$. Then
\[
\sum^{2Q}_{q=Q}m_{d}(B(q,\delta)) \ll \delta Q
\]
for $Q \in \mathbb{N}$ and $\delta\geq Q^{-(2v-1)/(1-v)}$.
\end{lem}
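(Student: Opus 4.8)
The plan is to reduce the statement directly to the one-dimensional Lemma~3.1 by exploiting the product structure of both the set $B(q,\delta)$ and the measure $m_d$. First I would observe that the defining inequality $\max\{\|P(q)x_1\|,\ldots,\|P(q)x_d\|\}<\delta$ holds precisely when $\|P(q)x_i\|<\delta$ for every coordinate $i$. Hence a point $x=(x_1,\ldots,x_d)\in K^d$ lies in $B(q,\delta)$ if and only if each $x_i$ lies in $A(q,\delta)=\{y\in K:\|P(q)y\|<\delta\}$, so that
\[
B(q,\delta)=A(q,\delta)\times A(q,\delta)\times\cdots\times A(q,\delta),
\]
the $d$-fold Cartesian product of the one-dimensional sets from the previous lemma.

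Since $m_d=m_1\times\cdots\times m_1$ is by construction the $d$-fold product measure, the measure of this product set factorises, and therefore
\[
m_d(B(q,\delta))=m_1(A(q,\delta))^d.
\]
The key remaining step is a monotonicity observation: because $m_1$ is a Borel probability measure we have $0\le m_1(A(q,\delta))\le 1$, and since $d\ge 1$ this forces $m_1(A(q,\delta))^d\le m_1(A(q,\delta))$. Summing over $Q\le q\le 2Q$ and invoking Lemma~3.1 (whose hypotheses on $v$ and on $\delta$ coincide exactly with those assumed here) then gives
\[
\sum_{q=Q}^{2Q}m_d(B(q,\delta))=\sum_{q=Q}^{2Q}m_1(A(q,\delta))^d\le\sum_{q=Q}^{2Q}m_1(A(q,\delta))\ll\delta Q,
\]
which is precisely the asserted bound.

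Given the groundwork already laid in Lemma~3.1, I do not expect any serious analytic obstacle; essentially all the work is carried by the Fourier-analytic estimate of the one-dimensional case. The only point demanding even a little care is the passage from the sum of $d$-th powers to the sum of first powers, which rests on $m_1$ being a \emph{probability} measure so that each factor is bounded by $1$. This inequality is admittedly wasteful---one would heuristically expect $m_d(B(q,\delta))$ to scale like $\delta^d$ rather than $\delta$---but the crude bound already suffices to transport the convergence-case conclusion to every dimension $d$, which is all that the Main result requires.
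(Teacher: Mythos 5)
Your proof is correct and takes essentially the same route as the paper: both reduce the statement to the one-dimensional lemma via the intermediate bound $m_d(B(q,\delta)) \le m_1(A(q,\delta))$ and then sum over $Q \le q \le 2Q$. The only (immaterial) difference is in how that bound is obtained --- the paper uses the containment $B(q,\delta) \subset \{x \in K^d : \|P(q)x_1\| < \delta\}$ and evaluates the measure of this cylinder set, whereas you use the exact factorisation $m_d(B(q,\delta)) = m_1(A(q,\delta))^d$ together with $m_1(A(q,\delta)) \le 1$.
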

\begin{proof}
\[
B(q,\delta) = \bigcap^{d}_{i=1} \{ (x_{1}, x_{2}, \dotsc , x_{d}) \in K^{d} : ||P(q)x_{i}||< \delta  \}
\]
\[
B(q,\delta) \subset \{ (x_{1}, x_{2}, \dotsc , x_{d}) \in K^{d} : ||P(q)x_{1}||< \delta  \}
\]
\[
m_{d} (\{ (x_{1}, x_{2}, \dotsc , x_{d}) \in K^{d} : ||P(q)x_{1}||< \delta  \})=m_{1}(A(q,\delta))
\]
This means that,
\[
m_{d}(B(q,\delta)) \leq m_{1}(A(q,\delta))
\]
So,
\[
\sum^{2Q}_{q=Q}m_{d}(B(q,\delta)) \leq \sum^{2Q}_{q=Q}m_{1}(A(q,\delta))
\]
From Lemma 5.3 we know that,
\[
\sum^{2Q}_{q=Q}m_{1}(A(q,\delta)) \ll \delta Q
\]
Therefore,
\[
\sum^{2Q}_{q=Q}m_{d}(B(q,\delta)) \ll \delta Q
\]
\end{proof}

Finally, we aim to show a convergence Khintchine-like property holds for rationals with polynomial denominators on a missing digit set $K^{d}$.
For a polynomial $P$ let
\[
W_{P(q)}(\psi)=\{x\in K^{d} : \text{max}\{|| P(q)x_{i}|| \} <\psi(q)~\text{for infinitely many }q\in\mathbb{N}\}.
\]

\begin{thm}
Let $\psi$ be a monotonically decreasing function. For a missing digit measure $m_{d}$ on a missing digit set on $[0,1]^{d}$ with a large base and only one missing digit,
\[
m_{d}(W_{P(q)}(\psi))=0,~\text{if}~\sum^{\infty}_{q=1}\psi(q)<\infty
\]

\end{thm}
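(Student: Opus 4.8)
The plan is to recognise the target set as a $\limsup$ set and apply the convergence Borel--Cantelli lemma. Since $B(q,\delta)$ is exactly the set of $x \in K^{d}$ with $\max_{i}\|P(q)x_{i}\| < \delta$, we may write
\[
W_{P(q)}(\psi) = \limsup_{q\to\infty} B(q,\psi(q)) = \bigcap_{N=1}^{\infty}\bigcup_{q\geq N} B(q,\psi(q)).
\]
By the convergence Borel--Cantelli lemma it therefore suffices to prove that $\sum_{q=1}^{\infty} m_{d}(B(q,\psi(q))) < \infty$, after which the conclusion $m_{d}(W_{P(q)}(\psi)) = 0$ is immediate. The work is entirely in establishing this summability from the block estimate supplied by the previous lemma.

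The difficulty is that the previous lemma bounds $\sum_{q=Q}^{2Q} m_{d}(B(q,\delta)) \ll \delta Q$ only for $\delta \geq Q^{-u}$, where $u = (2v-1)/(1-v)$, whereas $\psi(q)$ may well decay faster than this admissibility threshold. To circumvent this I would decompose the sum dyadically, writing $\sum_{q=1}^{\infty} = \sum_{k=0}^{\infty}\sum_{q=2^{k}}^{2^{k+1}}$, and exploit the monotonicity of $\psi$: for $q \in [2^{k}, 2^{k+1}]$ we have $\psi(q)\leq \psi(2^{k})$, hence $B(q,\psi(q)) \subseteq B(q,\psi(2^{k}))$. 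Within each block I then split into two cases according to whether $\psi(2^{k})$ lies above or below the threshold $(2^{k})^{-u}$. In the first case, $\psi(2^{k}) \geq (2^{k})^{-u}$, the lemma applies directly with $Q = 2^{k}$ and $\delta = \psi(2^{k})$, giving a block contribution $\ll 2^{k}\psi(2^{k})$; summing over $k$, the Cauchy condensation test shows that $\sum_{k} 2^{k}\psi(2^{k}) < \infty$ is equivalent to the hypothesis $\sum_{q}\psi(q) < \infty$. In the second case, $\psi(2^{k}) < (2^{k})^{-u}$, monotonicity gives $B(q,\psi(q)) \subseteq B(q,(2^{k})^{-u})$, and applying the lemma with $\delta = (2^{k})^{-u}$ (which meets the threshold with equality) bounds the block by $\ll (2^{k})^{1-u}$, whose sum over $k$ is the geometric series $\sum_{k} 2^{k(1-u)}$.

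The main obstacle is making that geometric series converge, which happens precisely when $u > 1$, i.e. $(2v-1)/(1-v) > 1$, equivalently $v > 2/3$. This is exactly where the hypotheses of a large base and a single missing digit are indispensable: they allow $\hat{\kappa}_{1}(m_{1})$ to be pushed close to $1$, so that the admissible interval $1-\tfrac{1}{n} < v < \hat{\kappa}_{1}(m_{1})$ contains values exceeding $\max\{2/3,\,1-\tfrac{1}{n}\}$, and one simply fixes such a $v$ at the outset. With $u > 1$ secured, both dyadic sums converge, whence $\sum_{q} m_{d}(B(q,\psi(q))) < \infty$ and Borel--Cantelli delivers $m_{d}(W_{P(q)}(\psi)) = 0$. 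I expect the only genuinely delicate point to be verifying the threshold bookkeeping in the two cases and confirming that the base can be taken large enough, uniformly in $n$, to force $v > 2/3$ while remaining below $\hat{\kappa}_{1}(m_{1})$.
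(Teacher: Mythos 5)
Your proposal is correct and shares the paper's basic ingredients --- the $d$-dimensional block-counting lemma applied on dyadic blocks, monotonicity of $\psi$, and the convergence Borel--Cantelli lemma --- but the step that does the real work is genuinely different, and yours is the more complete argument. The paper handles fast-decaying $\psi$ by imposing, as part of its choice of $\varepsilon$, the condition $\psi(Q)\ll Q^{-(1+\varepsilon)}$; it then replaces $\psi(q)$ by $Q^{-(1+\varepsilon)}$ on each block $[Q,2Q]$, obtains block sums $\ll Q^{-\varepsilon}$, and sums a geometric series to force the tail $\sum_{q\geq Q}m_{d}(B(q,\psi(q)))$ to $0$. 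But $\psi(Q)\ll Q^{-(1+\varepsilon)}$ for some fixed $\varepsilon>0$ does not follow from $\sum_{q}\psi(q)<\infty$ together with monotonicity (take $\psi(q)=1/(q\log^{2}q)$: the series converges, yet the condition fails for every $\varepsilon>0$), so the paper's argument as written only covers $\psi$ with polynomial decay beyond $1/q$. Your truncation at the admissibility threshold --- splitting each dyadic block according to whether $\psi(2^{k})$ lies above or below $(2^{k})^{-u}$, then using Cauchy condensation in the first case and the geometric series $\sum_{k}2^{k(1-u)}$ in the second --- eliminates that extra hypothesis and proves the theorem for every monotone $\psi$ with convergent sum, which is exactly what the statement claims. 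One piece of bookkeeping to flag: your requirement $v>2/3$ comes from reading the threshold $\delta\geq Q^{-(2v-1)/(1-v)}$ off the statement of the lemma, whereas the exponent actually derived in the lemma's proof is $u=\bigl(1-n(1-v)\bigr)/(1-v)$ (the two coincide only when $n=2$), and correspondingly the paper's proof requires $v>1-\frac{1}{n+1}$. Your argument is indifferent to which exponent is used, since case 2 only needs $u>1$: with the corrected exponent your condition becomes $v>\max\left\{\frac{n}{n+1},\,1-\frac{1}{n}\right\}$ rather than $v>\max\left\{\frac{2}{3},\,1-\frac{1}{n}\right\}$, still attainable because a large base with one missing digit pushes $\hat{\kappa}_{1}(m_{1})$ arbitrarily close to $1$. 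Finally, your closing worry about choosing the base ``uniformly in $n$'' is moot: the polynomial $P$, and hence $n$, is fixed before the base is chosen.
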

\begin{proof}
By choosing our base to be sufficiently large, we can ensure $\hat{\kappa}_{1}(m_{1})>1-\frac{1}{n+1}$.

Let $v$ be a number in the range $1-\frac{1}{n+1}<v<\hat{\kappa}_{1}(m)$.

Choose $\varepsilon>0$ sufficiently small such that:
\begin{enumerate}
\item$\varepsilon<\hat{\kappa}_{1}(m_{1})-v$
\item$\varepsilon<\dfrac{(n+1)v-n}{1-v}$
\item$f(Q) \ll  Q^{-(1+\varepsilon)}$.
\end{enumerate}
Since $f$ is convergent
\[
\sum^{2Q}_{q=Q}m_{d}(A(q,f(q))) \leq \sum^{2Q}_{q=Q}m_{d}(A(q,f(Q)) \ll \sum^{2Q}_{q=Q}m_{d}(A(q,Q^{-(1+\varepsilon)}))
\]
From the above lemma we then have that
\[
\sum^{2Q}_{q=Q}m_{d}(A(q,f(q))) \ll Q^{-(1+\varepsilon)}Q = Q^{-\varepsilon}
\]
\begin{align*}
\sum_{q=Q}^{\infty}m_{d}(A(q,f(q)))&\leq \sum^{2Q}_{q=Q}m_{d}(A(q,f(q)))+\sum^{4Q}_{q=2Q}m_{d}(A(q,f(q))) + \dotsm \\
&\ll Q^{-\varepsilon} + \left(2Q\right)^{-\varepsilon} +  \left(4Q\right)^{-\varepsilon} + \dotsm \\
&\ll Q^{\varepsilon}
\end{align*}
as $Q\to \infty$ we have $Q^{-\varepsilon}\to 0$ and so
\[
\sum^{\infty}_{q=Q}m_{d}(A(q,f(q))) \to 0
\]
This implies the convergence Khintchine property.
\end{proof}

\bibliography{URSS_biblio}
\end{document}